\theoremstyle{plain}
\newtheorem{thm}{Theorem}[section]
\newtheorem{lem}[thm]{Lemma}
\theoremstyle{definition}
\newtheorem{dfns-rems}[thm]{Definitions and Remarks}
\newtheorem{notas-rems}[thm]{Notations and Remarks}
\newtheorem{exmps-rems}[thm]{Examples and Remarks}
\DeclareMathOperator{\ind-match}{ind-match}
\begin{document}


\title[Regularity of symbolic powers of Cameron-Walker graphs]{Regularity of symbolic powers of edge ideals of Cameron-Walker graphs}


\author[S. A. Seyed Fakhari]{S. A. Seyed Fakhari}

\address{S. A. Seyed Fakhari, School of Mathematics, Statistics and Computer Science,
College of Science, University of Tehran, Tehran, Iran, and Institute of Mathematics, Vietnam Academy of Science and Technology, 18 Hoang Quoc Viet, Hanoi, Vietnam.}

\email{aminfakhari@ut.ac.ir}

\begin{abstract}
A Cameron-Walker graph is a graph for which the matching number and the induced matching number are the same. Assume that $G$ is a Cameron-Walker graph with edge ideal $I(G)$, and let $\ind-match(G)$ be the induced matching number of $G$. It is shown that for every integer $s\geq 1$, we have the equality ${\rm reg}(I(G)^{(s)})=2s+\ind-match(G)-1$, where $I(G)^{(s)}$ denotes the $s$-th symbolic power of $I(G)$.
\end{abstract}


\subjclass[2000]{Primary: 13D02, 05E99}


\keywords{Cameron-Walker graphs, Castelnuovo--Mumford regularity, Edge ideal, symbolic powers}


\thanks{This research is partially funded by the Simons Foundation Grant Targeted for Institute of Mathematics, Vietnam Academy of Science and Technology.}


\maketitle


\section{Introduction} \label{sec1}

Let $\mathbb{K}$ be a field and $S = \mathbb{K}[x_1,\ldots,x_n]$  be the
polynomial ring in $n$ variables over $\mathbb{K}$. Suppose that $M$ is a nonzero finitely generated graded $S$-module with minimal free resolution
$$0  \longrightarrow \cdots \longrightarrow  \bigoplus_{j}S(-j)^{\beta_{1,j}(M)} \longrightarrow \bigoplus_{j}S(-j)^{\beta_{0,j}(M)}   \longrightarrow  M \longrightarrow 0.$$
The Castelnuovo-Mumford regularity (or simply, regularity) of $M$, denoted by ${\rm reg}(M)$, is defined as
$${\rm reg}(M)={\rm max}\{j-i\mid \beta _{i,j}(M)\neq 0\}.$$

Let $G$ be a graph with vertex set $V(G)=\big\{x_1, \ldots, x_n\big\}$ and edge set $E(G)$. The {\it edge ideal} of $G$, denoted by $I(G)$, is the monomial ideal of $S$ which is generated by quadratic squarefree monomials corresponding to edges of $G$, i.e.,$$I(G)=\big(x_ix_j: x_ix_j\in E(G)\big).$$Computing and finding bounds for the regularity of edge ideals and their powers have been studied by a number of researchers (see for example \cite{ab},  \cite{abs}, \cite{b}, \cite{bbh1}, \cite{bbh}, \cite{bht}, \cite{dhs}, \cite{ha}, \cite{jns}, \cite{k}, \cite{khm}, \cite{msy}, \cite{sy} and \cite{wo}).

The study of the regularity of symbolic powers of edge ideals has been started by Gu, H${\rm \grave{a}}$, O'Rourke and Skelton \cite{ghos}. They proved that for every graph $G$ with induced matching number $\ind-match(G)$, and for every integer $s\geq 1$, we have $${\rm reg}(I(G)^{(s)})\geq 2s+\ind-match(G)-1,$$where $I(G)^{(s)}$ denotes the $s$-th symbolic power of $I(G)$, \cite[Theorem 4.6]{ghos}. We mention that the above inequality in the special case of $s=1$ was proved by Katzman \cite{k}. Also, the above inequality is true if one replaces the symbolic power with ordinary power and it is proved by Beyarslan, H${\rm \grave{a}}$ and Trung, \cite[Theorem 4.5]{bht}.

N. C. Minh Conjectured that for every graph $G$ and for any integer $s\geq 1$, the regularity of the $s$-th ordinary and symbolic powers of $I(G)$ are equal, i.e., $${\rm reg}(I(G)^{(s)})={\rm reg}(I(G)^s)$$ (see \cite{ghos}). We know from \cite[Theorem 5.9]{svv} that for any bipartite graph $G$, the equality $I(G)^{(s)}=I(G)^s$ holds for any $s\geq 1$. In particular, Minh's conjecture is trivially true for bipartite graphs. Gu, H${\rm \grave{a}}$, O'Rourke and Skelton \cite{ghos} verified Minh's conjecture for any cycle graphs. In \cite{s8}, we proved this conjecture for every unicyclic graph. Jayanthan and Kumar \cite{jk} showed that Minh's conjecture is true for some classes of graphs which are obtained by the clique sum of odd cycles and bipartite graphs.

In \cite{s11}, we computed the regularity of symbolic powers of edge ideals of chordal graphs. More precisely, it is proven in \cite[Theorem 3.3]{s11} that for every chordal graph $G$ and every integer $s\geq 1$, the equality$${\rm reg}(I(G)^{(s)})=2s+\ind-match(G)-1$$holds.

The goal of this paper is to compute the regularity of symbolic powers of edge ideals of the so-called {\it Cameron-Walker} graphs. We recall that a graph is said to be a Cameron-Walker graph if it has the same matching number and induced matching number. The reason for this naming is that the structure of theses graphs has been determined by Cameron and Walker \cite{cw}. Indeed, it is clear that a graph is Cameron-Walker if and only if all its connected components are Cameron-Walker. By \cite[Theorem 1]{cw} (see also \cite[Remark 0.1]{hhko}), a connected graph $G$ is a Cameron-Walker graph if and only if

$\bullet$ it is a star graph, or

$\bullet$ it is a star triangle, or

$\bullet$ it consists of a connected bipartite graph $H$ with vertex partition $V(H)=X\cup Y$ with the property that  there is at least one pendant edge attached  to each  vertex of $X$ and there may be some pendant triangles attached to each vertex of $Y$.

Banerjee, Beyarslan and H${\rm \grave{a}}$ \cite[Corollary 3.5]{bbh} prove that for every Cameron-Walker $G$ and any integer $s\geq1$,$${\rm reg}(I(G)^s)=2s+\ind-match(G)-1.$$As the main result of this paper, it is shown in Theorem \ref{main} that the above equality is true if one replaces the ordinary power with symbolic power. This, in particular, implies that Minh's conjecture is true for any Cameron-Waler graph.


\section{Preliminaries} \label{sec2}

In this section, we provide the definitions and basic facts which will be used in the next section.

Let $G$ be a simple graph with vertex set $V(G)=\big\{x_1, \ldots,
x_n\big\}$ and edge set $E(G)$. For a vertex $x_i$, its {\it degree}, denoted by ${\rm deg}_G(x_i)$, is the number of edges of $x_i$ which are incident to $x_i$. A vertex of degree one is a {\it leaf} and the unique edge incident to a leaf is called a {\it pendant edge}. A {\it pendant triangle} of $G$ is a triangle $T$ of $G$, with the property that exactly two vertices of $T$ have degree two in $G$. For every subset $U\subset V(G)$, the graph $G-U$ has vertex set $V(G-U)=V(G)\setminus U$ and edge set $E(G-U)=\{e\in E(G)\mid e\cap U=\emptyset\}$. When $U=\{x\}$ is a singleton, we write $G\setminus x$ instead of $G\setminus \{x\}$. A subgraph $H$ of $G$ is called {\it induced} provided that two vertices of $H$ are adjacent if and only if they are adjacent in $G$. A graph $G$ is called {\it chordal} if it has no induced cycle of length at least four. A subset $C$ of $V(G)$ is a {\it vertex cover} of $G$ if every edge of $G$ is incident to at least one vertex of $C$. A vertex cover $C$ is a {\it minimal vertex cover} if no proper subset of $C$ is a vertex cover of $G$. The set of minimal vertex covers of $G$ will be denoted by $\mathcal{C}(G)$.

For every subset $C$ of $\big\{x_1, \ldots, x_n\big\}$, we denote by $\mathfrak{p}_C$, the monomial prime ideal which is generated by the variables belong to $C$. It is well-known that for every graph $G$,$$I(G)=\bigcap_{C\in \mathcal{C}(G)}\mathfrak{p}_C.$$

Let $G$ be a graph. A subset $M\subseteq E(G)$ is a {\it matching} if $e\cap e'=\emptyset$, for every pair of edges $e, e'\in M$. The cardinality of the largest matching of $G$ is called the {\it matching number} of $G$ and is denoted by ${\rm match}(G)$. A matching $M$ of $G$ is an {\it induced matching} of $G$ if for every pair of edges $e, e'\in M$, there is no edge $f\in E(G)\setminus M$ with $f\subset e\cup e'$. The cardinality of the largest induced matching of $G$ is the {\it induced  matching number} of $G$ and is denoted by $\ind-match(G)$. As we mentioned in Section \ref{sec1}, a graph $G$ is a Cameron-Walker graph if ${\rm match}(G)=\ind-match(G)$.

Let $I$ be an ideal of $S$ and let ${\rm Min}(I)$ denote the set of minimal primes of $I$. For every integer $s\geq 1$, the $s$-th {\it symbolic power} of $I$,
denoted by $I^{(s)}$, is defined to be$$I^{(s)}=\bigcap_{\frak{p}\in {\rm Min}(I)} {\rm Ker}(S\rightarrow (S/I^s)_{\frak{p}}).$$We set $I^{(s)}=S$, for any integer $s\leq 0$.

Let $I$ be a squarefree monomial ideal with the irredundant
primary decomposition $$I=\frak{p}_1\cap\ldots\cap\frak{p}_r.$$It follows from \cite[Proposition 1.4.4]{hh} that for every integer $s\geq 1$, $$I^{(s)}=\frak{p}_1^s\cap\ldots\cap\frak{p}_r^s.$$In particular, for every graph $G$ and any integer $s\geq 1$, we have$$I(G)^{(s)}=\bigcap_{C\in \mathcal{C}(G)}\mathfrak{p}_C^s.$$


\section{Main results} \label{sec3}

In this section, we prove our main result, Theorem \ref{main}. The proof is based on an inductive argument and in order to use induction on power, we need the following lemma.

\begin{lem} \label{colon}
Let $G$ be a graph and assume that $T$ is a triangle of $G$, with vertex set $V(T)=\{x_1, x_2, x_3\}$. Suppose that ${\rm deg}_G(x_3)=1$. Then for every integer $s\geq 1$,$$\big(I(G)^{(s)}: x_1x_2x_3)=I(G)^{(s-2)}.$$
\end{lem}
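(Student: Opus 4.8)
The plan is to work with the explicit primary decomposition $I(G)^{(s)} = \bigcap_{C \in \mathcal{C}(G)} \mathfrak{p}_C^{\,s}$ and to compute the colon ideal prime-by-prime. Since colon distributes over intersection, $\big(I(G)^{(s)} : x_1x_2x_3\big) = \bigcap_{C \in \mathcal{C}(G)} \big(\mathfrak{p}_C^{\,s} : x_1x_2x_3\big)$. The first key observation is a structural one about the minimal vertex covers of $G$: because $x_3$ is a leaf with unique neighbour(s) among $\{x_1,x_2\}$ (indeed its only neighbours are $x_1$ and $x_2$, and $x_1x_2 \in E(G)$), every minimal vertex cover $C$ must contain at least two of the three vertices $x_1,x_2,x_3$ (the edge $x_1x_2$ forces one of $x_1,x_2$; the edges $x_1x_3$ and $x_2x_3$ then force a second vertex among the triangle). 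Conversely, covering strategies on $T$ that use exactly two of these vertices are all realizable. So I would partition $\mathcal{C}(G)$ according to whether $|C \cap \{x_1,x_2,x_3\}|$ equals $2$ or $3$.

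Next I would compute each local colon. For a monomial prime $\mathfrak{p}_C$ and a monomial $m = \prod x_i^{a_i}$, there is the standard formula $\big(\mathfrak{p}_C^{\,s} : m\big) = \mathfrak{p}_C^{\,s - \sum_{x_i \in C} a_i}$ (interpreted as $S$ when the exponent is $\le 0$), which follows since $\mathfrak{p}_C^{\,s}$, being generated by all monomials of degree $s$ in the variables of $C$, has the property that $m \cdot (\text{degree } t \text{ monomial in } C) \in \mathfrak{p}_C^{\,s}$ iff $t \ge s - \sum_{x_i\in C}a_i$. Here $m = x_1x_2x_3$, so $\sum_{x_i \in C} a_i = |C \cap \{x_1,x_2,x_3\}|$, which is $2$ or $3$ by the previous paragraph. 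Thus $\big(\mathfrak{p}_C^{\,s} : x_1x_2x_3\big) = \mathfrak{p}_C^{\,s-2}$ when $|C \cap \{x_1,x_2,x_3\}| = 2$, and equals $\mathfrak{p}_C^{\,s-3} \supseteq \mathfrak{p}_C^{\,s-2}$ when $|C \cap \{x_1,x_2,x_3\}| = 3$. In the intersection the terms with exponent $s-2$ dominate the terms with exponent $s-3$, provided every cover $C$ with $|C\cap\{x_1,x_2,x_3\}|=3$ also shows up "shrunk", i.e. provided $C' := C \setminus \{x_3\}$ (or the relevant two-element trace) is still in $\mathcal{C}(G)$ or at least contains a minimal vertex cover; one checks that removing the leaf $x_3$ from any vertex cover containing all of $x_1,x_2,x_3$ still leaves a vertex cover, so $\mathfrak{p}_{C'}^{\,s-2} \subseteq \mathfrak{p}_C^{\,s-3}$ with $C' \in \mathcal{C}(G)$ having trace exactly $\{x_1,x_2\}$. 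Hence the intersection collapses to $\bigcap_{C \in \mathcal{C}(G)} \mathfrak{p}_C^{\,s-2} = I(G)^{(s-2)}$, using again the primary decomposition of the symbolic power (and the convention $I(G)^{(t)} = S$ for $t \le 0$, which matches $\mathfrak{p}_C^{\,0} = S$).

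The step I expect to be the main obstacle is the bookkeeping around the covers $C$ with $|C \cap \{x_1,x_2,x_3\}| = 3$: I must be sure that each such $C$ does not contribute a strictly smaller ideal $\mathfrak{p}_C^{\,s-3}$ that survives the intersection. The resolution is the leaf-deletion argument above — since $x_3$ has no neighbour outside $\{x_1,x_2\}$ and at least one of $x_1,x_2$ remains in $C \setminus \{x_3\}$, the set $C \setminus \{x_3\}$ still covers every edge, so it contains a minimal vertex cover $C'$ with $\{x_1,x_2\} \subseteq C'$ and $x_3 \notin C'$, whence $\mathfrak{p}_{C'} \subseteq \mathfrak{p}_C$ and the weaker condition is subsumed. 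A second minor point to handle carefully is the degenerate range $s = 1$ and $s = 2$: for $s=1$ the claim reads $\big(I(G) : x_1x_2x_3\big) = S$, which is immediate since $x_1x_2 \in I(G)$ divides $x_1x_2x_3$; for $s = 2$ it reads $\big(I(G)^{(2)} : x_1x_2x_3\big) = S$, which follows because $x_1x_2 x_1 x_2 \in I(G)^{(2)}$ need not divide $x_1x_2x_3$, so instead one uses that $x_1x_2x_3$ times any degree-$0$ monomial lies in every $\mathfrak{p}_C^2$ since $|C \cap \{x_1,x_2,x_3\}| \ge 2$ forces $x_1x_2x_3 \in \mathfrak{p}_C^{2}$ already — and these are exactly covered by the uniform formula above with $s - 2 \le 0$.
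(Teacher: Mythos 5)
Your proposal is correct and follows essentially the same route as the paper: write $I(G)^{(s)}=\bigcap_{C\in\mathcal{C}(G)}\mathfrak{p}_C^{\,s}$, distribute the colon over the intersection, and compute $(\mathfrak{p}_C^{\,s}:x_1x_2x_3)$ via the trace $|C\cap\{x_1,x_2,x_3\}|$. The only difference is that the paper notes up front that no \emph{minimal} vertex cover can contain all three triangle vertices (since $C\setminus\{x_3\}$ would still be a cover, contradicting minimality --- exactly your leaf-deletion observation), so your trace-$3$ case is vacuous and the subsumption bookkeeping is unnecessary.
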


\begin{proof}
For any minimal vertex cover $C$ of $G$, we have $V(T) \nsubseteq C$, because otherwise $C\setminus \{x_3\}$ would be vertex cover of $G$ which is properly contained in $C$. On the other hand, It is obvious that for every vertex cover $C$ of $G$, we have $|C\cap V(T)| \geq 2$. Therefore, $|C\cap V(T)|=2$, for every minimal vertex cover $C\in \mathcal{C}(G)$. Hence,$$\big(\mathfrak{p}_C^s: x_1x_2x_3\big)=\mathfrak{p}_C^{s-2},$$for every $C\in \mathcal{C}(G)$.  Since$$I(G)^{(s)}=\bigcap_{C\in \mathcal{C}(G)}\mathfrak{p}_C^s,$$we conclude that$$\big(I(G)^{(s)}: x_1x_2x_3\big)=\bigcap_{C\in \mathcal{C}(G)}\big(\mathfrak{p}_C^s: x_1x_2x_3\big)=\bigcap_{C\in \mathcal{C}(G)}\mathfrak{p}_C^{s-2}=I(G)^{(s-2)}.$$
\end{proof}

We are now ready to prove the main result of this paper.

\begin{thm} \label{main}
Let $G$ be a Cameron-Walker graph. Then for every integer $s\geq 1$, we have$${\rm reg}(I(G)^{(s)})=2s+\ind-match(G)-1.$$
\end{thm}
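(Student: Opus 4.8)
\section*{Proof proposal}

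The plan is to argue by induction on $s$, with the base case $s=1$ already known from Katzman's theorem (the lower bound in \cite[Theorem 4.6]{ghos}, specialized to $s=1$, together with Katzman's computation \cite{k} of ${\rm reg}(I(G))$ for Cameron-Walker graphs, which gives $\mathrm{reg}(I(G)) = \ind\text{-}match(G)+1$). Since the lower bound ${\rm reg}(I(G)^{(s)})\geq 2s+\ind\text{-}match(G)-1$ holds for every graph by \cite[Theorem 4.6]{ghos}, the entire content is the upper bound ${\rm reg}(I(G)^{(s)})\leq 2s+\ind\text{-}match(G)-1$, and we may assume $s\geq 2$.

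First I would reduce to a connected Cameron-Walker graph $G$ that is neither a star nor a star triangle, using the structure theorem recalled in the introduction: if $G$ is a star, $I(G)^{(s)}=I(G)^s$ is a power of a complete-bipartite edge ideal and the regularity is $2s$ (here $\ind\text{-}match=1$); if $G$ is a star triangle the computation is equally direct; and for disconnected $G$ one combines components via the standard fact that regularity of a tensor product of the rings $S/I^{(s)}$ adds up, so it suffices to treat each component. So assume $G$ has the third form: a connected bipartite graph $H$ on $X\cup Y$ with at least one pendant edge at every vertex of $X$ and possibly pendant triangles at vertices of $Y$. The key case split is whether $G$ has a pendant triangle. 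If it has one, say $T=\{x_1,x_2,x_3\}$ with $\mathrm{deg}_G(x_3)=1$ (and, being a pendant triangle, also $\mathrm{deg}_G(x_2)=2$), I would use the short exact sequence
\[
0\longrightarrow \frac{S}{\big(I(G)^{(s)}:x_1x_2x_3\big)}(-3)\xrightarrow{\ \cdot x_1x_2x_3\ } \frac{S}{I(G)^{(s)}}\longrightarrow \frac{S}{\big(I(G)^{(s)},x_1x_2x_3\big)}\longrightarrow 0,
\]
whence ${\rm reg}(S/I(G)^{(s)})\leq \max\{{\rm reg}(S/(I(G)^{(s)}:x_1x_2x_3))+3,\ {\rm reg}(S/(I(G)^{(s)},x_1x_2x_3))\}$. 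By Lemma \ref{colon} the colon term is $S/I(G)^{(s-2)}$, and by induction its regularity contributes $2(s-2)+\ind\text{-}match(G)-1+3 = 2s+\ind\text{-}match(G)-2$, which is safely below the target. For the quotient term $S/(I(G)^{(s)},x_1x_2x_3)$ one expands $\prod$-wise: modulo $x_1x_2x_3$ one can further localize/split, and the point is that adding the three variables' worth of information, together with a further short exact sequence peeling off $x_1,x_2,x_3$ one at a time, reduces to symbolic powers of edge ideals of the smaller Cameron-Walker graph $G\setminus\{x_1,x_2,x_3\}$ (removing a pendant triangle keeps the graph Cameron-Walker and lowers $\ind\text{-}match$ by exactly $1$), for which induction on the number of vertices applies and gives a bound of $2s+(\ind\text{-}match(G)-1)-1$.

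If $G$ has no pendant triangle, then $G$ is itself bipartite (it is $H$ together with pendant edges, all of which attach to $X$, keeping it bipartite), and for bipartite graphs $I(G)^{(s)}=I(G)^s$ by \cite[Theorem 5.9]{svv}, so the result follows immediately from the Banerjee-Beyarslan-H\`a computation \cite[Corollary 3.5]{bbh} of ${\rm reg}(I(G)^s)$ for Cameron-Walker graphs. This disposes of the bipartite subcase cleanly and isolates the pendant-triangle case as the real work.

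The main obstacle I anticipate is controlling the quotient term ${\rm reg}(S/(I(G)^{(s)},x_1x_2x_3))$: one must identify $(I(G)^{(s)},x_1x_2x_3)$, or rather a sequence of ideals obtained by successively adjoining $x_3$, then $x_2$, then $x_1$, with symbolic powers of edge ideals of explicit subgraphs of $G$ — being careful that a symbolic power does not commute naively with adding a variable, so one needs a precise description via the vertex-cover decomposition $I(G)^{(s)}=\bigcap_{C\in\mathcal C(G)}\mathfrak p_C^s$ and must track how the minimal vertex covers behave when $x_1,x_2,x_3$ are deleted. A secondary technical point is organizing a double induction (on $s$ and on $|V(G)|$) so the inductive hypothesis is available for both $I(G)^{(s-2)}$ and the edge ideals of smaller Cameron-Walker graphs, and checking that all the arithmetic of the regularity bounds through the short exact sequences lands at or below $2s+\ind\text{-}match(G)-1$, with the lower bound from \cite{ghos} forcing equality.
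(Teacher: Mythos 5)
Your skeleton---lower bound from \cite[Theorem 4.6]{ghos}, reduction to the upper bound, the bipartite subcase via \cite{svv} and \cite[Corollary 3.5]{bbh}, and an induction that colons out a pendant triangle through Lemma \ref{colon}---is indeed the paper's strategy, but two of your steps have genuine gaps.

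First, the disconnected case. The rings $S/I(G)^{(s)}$ do not tensor over components: for ideals $I,J$ in disjoint sets of variables one has $(I+J)^{(s)}=\sum_{i+j=s}I^{(i)}J^{(j)}$, not $I^{(s)}+J^{(s)}$, so $S/I(G)^{(s)}$ is not the tensor product of the $S_i/I(G_i)^{(s)}$ and regularity does not ``add up'' by any standard K\"unneth-type fact. The paper needs the genuinely nontrivial \cite[Theorem 5.11]{hntt} here, and this is precisely why its induction hypothesis must deliver the bound for \emph{every} power $k\le s$ of each component, not only the $s$-th; your statement of the reduction is false as written, though repairable by that citation.

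Second, and more seriously, your one-shot exact sequence multiplies by $x_1x_2x_3$ all at once and therefore must control ${\rm reg}\big(S/(I(G)^{(s)},x_1x_2x_3)\big)$, and your plan for that term does not close. The ideal $(I(G)^{(s)},x_1x_2x_3)$ is not $(I(G\setminus\{x_1,x_2,x_3\})^{(s)},x_1x_2x_3)$, and peeling variables from it afterwards just regenerates the hard objects: for instance $\big((I(G)^{(s)},x_1x_2x_3):x_1\big)=(I(G)^{(s)}:x_1)+(x_2x_3)$, which contains the very colon ideal whose regularity is the crux of the problem. The paper instead peels $x_1$, then $x_2$, then $x_3$ one at a time from the outset, so that each ``add the variable'' branch becomes a vertex deletion of the form $(I(G\setminus x_i)^{(s)},x_i)$, and the surviving colon branches are resolved by two specific lemmas your proposal never identifies and cannot do without: $(I(H)^{(s)}:uv)=I(H)^{(s-1)}$ when $uv$ is a pendant edge of $H$ (\cite[Lemma 3.3]{s8}, applied to the pendant edge $x_1x_2$ of $G\setminus x_3$ and to $x_1x_3$ of $G\setminus x_2$), and ${\rm reg}(J^{(s)}:x)\le{\rm reg}(J^{(s)})$ for a variable $x$ (\cite[Lemma 4.2]{s3}, applied to $G\setminus\{x_2,x_3\}$, whose induced matching number is exactly one less than that of $G$). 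Without these identifications the inductive arithmetic has nothing to land on. A smaller point: stars and star triangles are chordal and the paper dispatches them (and all chordal $G$) by \cite[Theorem 3.3]{s11}; the star-triangle case is not ``equally direct,'' since there $I(G)^{(s)}\ne I(G)^s$.
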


\begin{proof}
By \cite[Theorem 4.6]{ghos}, it is enough to show that$${\rm reg}(I(G)^{(s)})\leq 2s+\ind-match(G)-1.$$Without lose of generality, assume that $G$ has no isolated vertex and suppose $V(G)=\{x_1, \ldots, x_n\}$. We use induction on $|E(G)|+s$. The assertion is well-known for for $s=1$. Thus, assume that $s\geq 2$. If $|E(G)|=1$, then $I(G)=(x_1x_2)$. Consequently, $I(G)^{(s)}=(x_1^sx_2^s)$ and$${\rm reg}(I(G)^{(s)})=2s=2s+\ind-match(G)-1.$$Hence, suppose $|E(G)|\geq 2$.

First assume that $G$ is a disconnected graph and suppose $G_1, \ldots, G_p$ ($p\geq 2$) are the connected components of $G$. Let $H$ denote the disjoint union of $G_1, \ldots, G_{p-1}$. Clearly,$$\ind-match(H)+\ind-match(G_p)=\ind-match(G).$$Since $H$ and $G_p$ are Cameron-Walker graphs, using the induction hypothesis, for every integer $k\leq s$ we have$${\rm reg}(I(H)^{(k)})\leq 2k+\ind-match(H)-1,$$and$${\rm reg}(I(G_p)^{(k)})\leq 2k+\ind-match(G_p)-1.$$We conclude from \cite[Theorem 5.11]{hntt} that$${\rm reg}(I(G)^{(s)})\leq 2s+\ind-match(G)-1.$$

We now assume that $G$ is a connected graph. The desired equality follows from \cite[Theorem 3.3]{s11} if $G$ is a chordal graph. Thus, assume that $G$ is not  chordal. In particular, $G$ is not a star or a star triangle graph. Hence, it consists of a connected bipartite graph $H$ with vertex partition $V(H)=X\cup Y$  such that there is at least one pendant edge attached to each vertex of $X$ and that there may be some pendant triangles attached to each vertex of $Y$. If $G$ has no triangle, then it is a bipartite graph and by \cite[Theorem 5.9]{svv}, we have $I(G)^{(s)}=I(G)^s$. Therefore, in this case, the assertion follows from \cite[Corollary 3.5]{bbh}. Hence, assume that $G$ has at least one triangle, say $T$. Suppose without loss of generality that $V(T)=\{x_1, x_2, x_3\}$ and that ${\rm deg}_G(x_2)={\rm deg}_G(x_3)=2$. Consider the following short exact sequence.
\begin{align*}
0 \longrightarrow \frac{S}{(I(G)^{(s)}:x_1)}(-1)\longrightarrow \frac{S}{I(G)^{(s)}}\longrightarrow \frac{S}{I(G)^{(s)}+(x_1)}\longrightarrow 0
\end{align*}
It follows that
\[
\begin{array}{rl}
{\rm reg}(I(G)^{(s)})\leq \max\big\{{\rm reg}(I(G)^{(s)}:x_1)+1, {\rm reg}(I(G)^{(s)},x_1)\big\}.
\end{array} \tag{1} \label{1}
\]
As $G\setminus x_1$ is a (disconnected) Cameron-Walker graph with$$\ind-match(G\setminus x_1)=\ind-match(G),$$we conclude from the induction hypothesis that
\[
\begin{array}{rl}
&{\rm reg}(I(G)^{(s)},x_1)={\rm reg}(I(G\setminus x_1)^{(s)},x_1)={\rm reg}(I(G\setminus x_1)^{(s)})\\ &\leq 2s+\ind-match(G\setminus x_1)-1\\ &=2s+\ind-match(G)-1.
\end{array} \tag{2} \label{9}
\]

Therefore, using inequalities (\ref{1}) and (\ref{9}), we only need to show that$${\rm reg}(I(G)^{(s)}:x_1)\leq 2s+\ind-match(G)-2.$$Consider the following short exact sequence.
\begin{align*}
0 \longrightarrow \frac{S}{(I(G)^{(s)}:x_1x_2)}(-1)\longrightarrow \frac{S}{(I(G)^{(s)}:x_1)}\longrightarrow \frac{S}{(I(G)^{(s)}:x_1)+(x_2)}\longrightarrow 0
\end{align*}
It follows that
\[
\begin{array}{rl}
{\rm reg}(I(G)^{(s)}:x_1)\leq \max\big\{{\rm reg}(I(G)^{(s)}:x_1x_2)+1, {\rm reg}\big((I(G)^{(s)}:x_1), x_2\big)\big\}.
\end{array} \tag{3} \label{2}
\]

\vspace{0.3cm}
{\bf Claim 1.} ${\rm reg}(I(G)^{(s)}:x_1x_2)\leq 2s+\ind-match(G)-3$.

\vspace{0.3cm}
{\it Proof of Claim 1.} Consider the following short exact sequence.
\begin{align*}
0 \longrightarrow \frac{S}{(I(G)^{(s)}:x_1x_2x_3)}(-1)\longrightarrow \frac{S}{(I(G)^{(s)}:x_1x_2)}\longrightarrow \frac{S}{(I(G)^{(s)}:x_1x_2)+(x_3)}\longrightarrow 0
\end{align*}
It follows that
\[
\begin{array}{rl}
{\rm reg}(I(G)^{(s)}:x_1x_2)\leq \max\big\{{\rm reg}(I(G)^{(s)}:x_1x_2x_3)+1, {\rm reg}\big((I(G)^{(s)}:x_1x_2), x_3\big)\big\}.
\end{array} \tag{4} \label{3}
\]
We conclude from Lemma \ref{colon} and the induction hypothesis that
\[
\begin{array}{rl}
&{\rm reg}(I(G)^{(s)}:x_1x_2x_3)={\rm reg}(I(G)^{(s-2)})\leq 2(s-2)+\ind-match(G)-1\\ &=2s+\ind-match(G)-5.
\end{array} \tag{5} \label{4}
\]
On the other hand,$${\rm reg}\big((I(G)^{(s)}:x_1x_2), x_3\big)={\rm reg}\big((I(G)^{(s)}, x_3):x_1x_2\big)={\rm reg}\big(I(G\setminus x_3)^{(s)}:x_1x_2\big).$$
Since $x_1x_2$ is a pendant edge of $G\setminus x_3$, it follows from \cite[Lemma 3.3]{s8} that$$\big(I(G\setminus x_3)^{(s)}:x_1x_2\big)=I(G\setminus x_3)^{(s-1)}.$$Therefore,$${\rm reg}\big((I(G)^{(s)}:x_1x_2), x_3\big)={\rm reg}\big(I(G\setminus x_3)^{(s-1)}\big).$$As $G\setminus x_3$ is an induced subgraph of $G$, using \cite[Corollary 4.5]{ghos}, we have$${\rm reg}\big(I(G\setminus x_3)^{(s-1)}\big)\leq {\rm reg}\big(I(G)^{(s-1)}\big),$$and it follows from the induction hypothesis that
\[
\begin{array}{rl}
&{\rm reg}\big((I(G)^{(s)}:x_1x_2), x_3\big)\leq {\rm reg}\big(I(G)^{(s-1)}\big)\leq 2(s-1)+\ind-match(G)-1\\ &=2s+\ind-match(G)-3.$$
\end{array} \tag{6} \label{5}
\]
Finally, the assertion of Claim 1 follows from inequalities (\ref{3}), (\ref{4}) and (\ref{5}).

\vspace{0.3cm}
{\bf Claim 2.} ${\rm reg}\big((I(G)^{(s)}:x_1), x_2\big)\leq 2s+\ind-match(G)-2$.

\vspace{0.3cm}
{\it Proof of Claim 2.} Consider the following short exact sequence.
\begin{align*}
0 \longrightarrow &\frac{S}{\big(\big((I(G)^{(s)}:x_1),x_2\big): x_3\big)}(-1)\longrightarrow \frac{S}{\big((I(G)^{(s)}:x_1),x_2\big)}\longrightarrow\\ &\frac{S}{\big((I(G)^{(s)}:x_1),x_2, x_3\big)}\longrightarrow 0
\end{align*}
It follows that
\[
\begin{array}{rl}
&{\rm reg}\big((I(G)^{(s)}:x_1), x_2\big)\leq\\ &\max\big\{{\rm reg}\big(\big((I(G)^{(s)}:x_1),x_2\big): x_3\big)+1, {\rm reg}\big((I(G)^{(s)}:x_1),x_2, x_3\big)\big\}.
\end{array} \tag{7} \label{6}
\]
Note that
\begin{align*}
& {\rm reg}\big(\big((I(G)^{(s)}:x_1),x_2\big): x_3\big)={\rm reg}\big((I(G)^{(s)},x_2): x_1x_3\big)={\rm reg}\big((I(G\setminus x_2)^{(s)},x_2): x_1x_3\big)\\ &={\rm reg}\big(\big(I(G\setminus x_2)^{(s)}: x_1x_3\big),x_2\big)={\rm reg}\big(I(G\setminus x_2)^{(s)}: x_1x_3\big).
\end{align*}
As $x_1x_3$ is a pendant edge of $G\setminus x_2$, it follows from \cite[Lemma 3.3]{s8} that$$\big(I(G\setminus x_2)^{(s)}:x_1x_3\big)=I(G\setminus x_2)^{(s-1)}.$$Therefore,$${\rm reg}\big(\big((I(G)^{(s)}:x_1),x_2\big): x_3\big)={\rm reg}\big(I(G\setminus x_2)^{(s-1)}\big).$$Since $G\setminus x_2$ is an induced subgraph of $G$, using \cite[Corollary 4.5]{ghos}, we have$${\rm reg}\big(I(G\setminus x_2)^{(s-1)}\big)\leq {\rm reg}\big(I(G)^{(s-1)}\big),$$and it follows from the induction hypothesis that
\[
\begin{array}{rl}
&{\rm reg}\big(\big((I(G)^{(s)}:x_1),x_2\big): x_3\big)\leq {\rm reg}\big(I(G)^{(s-1)}\big)\leq 2(s-1)+\ind-match(G)-1\\ &=2s+\ind-match(G)-3.
\end{array} \tag{8} \label{7}
\]

On the other hand,
\begin{align*}
& {\rm reg}\big((I(G)^{(s)}:x_1),x_2, x_3\big)={\rm reg}\big((I(G)^{(s)},x_2,x_3): x_1\big)\\ &={\rm reg}\big((I(G\setminus \{x_2,x_3\})^{(s)},x_2, x_3): x_1\big)\\ &={\rm reg}\big(\big(I(G\setminus \{x_2,x_3\})^{(s)}: x_1\big),x_2, x_3\big)\\ &={\rm reg}\big(I(G\setminus \{x_2,x_3\})^{(s)}: x_1\big).
\end{align*}

We know from \cite[Lemma 4.2]{s3} that$${\rm reg}\big(I(G\setminus \{x_2,x_3\})^{(s)}: x_1\big)\leq {\rm reg}\big(I(G\setminus \{x_2,x_3\})^{(s)}\big).$$Consequently,$${\rm reg}\big((I(G)^{(s)}:x_1),x_2, x_3\big)\leq {\rm reg}\big(I(G\setminus \{x_2,x_3\})^{(s)}\big).$$As $G\setminus \{x_2,x_3\}$ is a Cameron-Walker graph with$$\ind-match(G\setminus \{x_2,x_3\})=\ind-match(G)-1,$$ we conclude from the induction hypothesis that
\begin{align*}
& {\rm reg}\big(I(G\setminus \{x_2,x_3\})^{(s)}\big)\leq 2s+\ind-match(G\setminus \{x_2,x_3\})-1\\ &=2s+\ind-match(G)-2.
\end{align*}
Therefore,
\[
\begin{array}{rl}
{\rm reg}\big((I(G)^{(s)}:x_1),x_2, x_3\big)\leq 2s+\ind-match(G)-2.
\end{array} \tag{9} \label{8}
\]

It now follows from inequalities (\ref{6}), (\ref{7}) and (\ref{8}) that$${\rm reg}\big((I(G)^{(s)}:x_1), x_2\big)\leq 2s+\ind-match(G)-2,$$and this proves claim 2.

\vspace{0.3cm}
We deduce from Claims 1, 2, and inequality (\ref{2}) that$${\rm reg}(I(G)^{(s)}:x_1)\leq 2s+\ind-match(G)-2.$$Hence, using inequalities (\ref{1}) and $\ref{9}$, we have$${\rm reg}(I(G)^{(s)})\leq 2s+\ind-match(G)-1.$$The reverse inequality follows from \cite[Theorem 4.6]{ghos}, and this completes the proof.
\end{proof}





\end{document}